\documentclass[a4paper]{amsart}
\usepackage{amsmath,amssymb}
\usepackage{amsmath}
\usepackage{comment}
\usepackage{amsthm}
\usepackage{mathtools}
\usepackage{amssymb}
\usepackage{framed}
\usepackage[hidelinks]{hyperref}
\usepackage{xcolor}
\usepackage{graphicx}
\usepackage[a4paper,  margin=2.5cm]{geometry}

\newtheorem{theo}{Theorem}[section]
\newtheorem{pr}{Proposition}[section]
\newtheorem{lm}[pr]{Lemma}

\newtheorem{re}[pr]{Remark}
\newcommand{\er}{\mathbf e_r}
\newcommand{\ep}{\mathbf e_\phi}
\newcommand{\ez}{\mathbf e_z}

\DeclareMathOperator{\di}{div}
\DeclareMathOperator{\cu}{curl}

\begin{document}

\title[Seifert type MHS equilibria]{Seifert type magnetohydrostatic equilibria in rotationally symmetric toroidal domains}

\author{Daniel Peralta-Salas}
\address{Instituto de Ciencias Matem\'aticas, Consejo Superior de
  Investigaciones Cient\'\i ficas, 28049 Madrid, Spain}
\email{dperalta@icmat.es}

\author{Radu Slobodeanu}
\address{Faculty of Physics, University of Bucharest, P.O. Box Mg-11, RO--077125 Bucharest-M\u agurele, Romania}
\email{radualexandru.slobodeanu@g.unibuc.ro}

\subjclass[2020]{35Q31, 35Q35, 58J60}
\keywords{magnetohydrostatic equilibria, Seifert foliations,  axisymmetry, rotational transform}

\begin{abstract}
We construct magnetohydrostatic equilibria in rotationally symmetric toroidal domains whose magnetic lines define an $(m,n)$ Seifert fibration for arbitrary coprime integers $m,n$.
\end{abstract}

\maketitle

\section{Introduction}\label{S:intro}

In magnetohydrodynamics (MHD) \cite{KK}, plasma equilibria are described by a magnetic vector field $B$ in a smooth bounded domain $\Omega \subset \mathbb{R}^3$ satisfying 
\begin{equation}\label{MHSeq}
\left\{
\begin{array}{lll}
B\times\cu  B +\nabla P=0  & \text{ in } \Omega\\[2mm]
\di B = 0 & \text{ in } \Omega \\[2mm]
B\cdot  N =0, \quad P\equiv const. & \text{ on } \partial \Omega 
\end{array} \right.
\end{equation}
for some (plasma) \emph{pressure} function $P$. Here $N$ is the outward unit normal vector on $\partial\Omega$. A vector field $B$ satisfying Equations~\eqref{MHSeq} will be called an MHS equilibrium.

In this article we are concerned with axisymmetric MHS equilibria. In this case, it is well known that Equations~\eqref{MHSeq} reduce to a scalar elliptic PDE for a rotationally symmetric stream function $\psi$. Indeed, using cylindrical coordinates $(r,\phi,z)$, with $\phi\in\mathbb R/(2\pi\mathbb Z)$, and the associated orthonormal frame
$(\mathbf e_r,\mathbf e_\phi,\mathbf e_z)$, the MHS equilibrium reads as
\begin{equation*}
B=-\frac{\partial_z\psi}{r}\mathbf e_r+\frac{\partial_r\psi}{r}\mathbf e_z+\frac{F(\psi)}{r}\mathbf e_\phi\,,
\end{equation*}
where the stream function $\psi(r,z)$ satisfies the Grad-Shafranov equation 
\begin{equation*}
\partial_{rr}\psi +\partial_{zz}\psi -\frac1r\partial_r\psi=r^2H'(\psi)-\frac12(F^2)'(\psi)\,,    
\end{equation*}
and the functions $F$ and $H$ can be picked freely. We observe that the function $H$ is, in fact, minus the plasma pressure $P$.

Our goal in this paper is to construct MHS equilibria in rotationally symmetric toroidal domains
whose trajectories define a Seifert fibration with invariants $(m,n)$ \cite{Seif}, for arbitrary nonzero coprime integers $m, n$. The magnetic axis is the only exceptional fibre. MHS equilibria all whose orbits are closed of type $(0,1)$ were constructed by Lortz~\cite{Lortz} in non-axisymmetric toroidal domains. There are also many examples of axisymmetric equilibria all whose orbits are closed and poloidal (i.e., of type $(1,0)$), see~\cite{Salat}. The case $(m,n)=(2,1)$ can be explicitly obtained using the Solov'ev solution~\cite{solo}, and it has been recently used in~\cite{landre} in order to construct counterexamples to Grad's conjecture. We observe that explicit and analytic MHS equilibria defining arbitrary Seifert fibrations were constructed on $\mathbb{S}^3$ (endowed with the canonical metric)~\cite{Ps,slo20}, but the Euclidean proof is much more complicated. Sufficient conditions for an MHS equilibrium to have all orbits closed can be seen in~\cite{Perre}.

In the statement of our main theorem we use the following notation. Given a rotationally symmetric toroidal domain $\Omega$, we denote its meridian $\mu$ as the intersection $\partial\Omega\cap \{x=0\}$ and
its longitude $\lambda$ as a circle $\{r=r_0,z=z_0\}$ in $\Omega$. The curve $\lambda$ is oriented by increasing $\phi$ and $\mu$ is oriented counterclockwise (when seen from the $\{x>0\}$ half-space). 

\begin{theo}\label{prop:sf-existence}
Fix an integer $k\geq2$ and a real $0<\alpha<1$. There is
$\varepsilon_0=\varepsilon_0(k,\alpha)>0$ such that, for every
$0<\varepsilon<\varepsilon_0$ and every pair of coprime nonzero
integers $m,n$, there is an axisymmetric MHS equilibrium
$B\in C^{k,\alpha}(\overline{\Omega_\varepsilon})$ in a smooth rotationally symmetric toroidal domain $\Omega_\varepsilon$ satisfying Equations~\eqref{MHSeq} and the following properties:
\begin{enumerate}
\item  The field $B$ is nowhere zero, and the level sets of the pressure $P\in C^{k,\alpha}(\overline{\Omega_\varepsilon})$ define a foliation of nested toroidal surfaces enclosing a critical line $C$.  
\item The curve $C$, which is a magnetic line, is a circle of the form $\{r=1+\varepsilon^2+O(\varepsilon^3),z=O(\varepsilon^3)\}$, and
every other orbit of $B$ is closed and lies on a (toroidal) level set of $P$,
with winding class $m\mu+n\lambda$ (where $\mu$ and $\lambda$ are defined and oriented as above). In particular, 
\begin{equation*}\label{eq:sf-linking}
 \operatorname{lk}(\Gamma,C)=-m
\end{equation*}
for every noncore orbit $\Gamma$. 
\end{enumerate}
Therefore, the orbit foliation defines a Seifert fibration whose only exceptional fibre is $C$, and the regular fibres make $m$ turns in the poloidal direction and $n$ turns in the toroidal direction.
\end{theo}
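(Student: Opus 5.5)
We construct the equilibrium through the Grad--Shafranov reduction recalled above. We look for a stream function $\psi$ near the circle $\{r=1,z=0\}$ whose level curves in the meridian half-plane $(r,z)$ are closed curves nested around one nondegenerate extremum $(r_*,z_*)$. The domain $\Omega_\varepsilon$ is the solid of revolution bounded by one of these level curves. We take $H(\psi)=a\psi$ and $F^2(\psi)=b+c\psi$, so the pressure $P=-H(\psi)$ is a function of $\psi$ alone; its level sets are then the tori $\{\psi=\text{const}\}$. The Grad--Shafranov equation becomes the linear problem $\partial_{rr}\psi+\partial_{zz}\psi-\frac1r\partial_r\psi=ar^2-\frac c2$, a Solov'ev-type equation. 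Its explicit polynomial solutions are, for instance,
\[
\psi=\tfrac{a}{8}r^4-\tfrac c2 z^2+\beta\bigl((r^2-1)^2+\gamma r^2z^2\bigr)+\dots,
\]
and we may add homogeneous solutions if needed. The parameters are tuned so that the critical point lies at $r_*=1+\varepsilon^2+O(\varepsilon^3)$, $z_*=O(\varepsilon^3)$. If exactly polynomial data do not give enough freedom, we instead use the implicit function theorem in $C^{k,\alpha}$ for the elliptic problem with $\varepsilon$ as a parameter; this is where the regularity class $C^{k,\alpha}$ and the threshold $\varepsilon_0(k,\alpha)$ enter.

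Next we compute the rotational transform. On each invariant torus $T_s=\{\psi=s\}$ the field $B$ has a poloidal component, given by $\nabla\psi\times\nabla\phi$, and a toroidal component $F(s)/r$. A field line makes one poloidal turn in the time $\tau(s)=\oint_{\gamma_s}\frac{r\,d\ell}{|\nabla\psi|}$, and during that time $\phi$ advances by
\[
\Delta\phi(s)=F(s)\oint_{\gamma_s}\frac{d\ell}{r\,|\nabla\psi|}.
\]
Every orbit is closed with winding class $m\mu+n\lambda$ exactly when $\Delta\phi(s)/2\pi=n/m$ for all $s$, with the sign matching the orientations of $\mu$ and $\lambda$. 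So we need a constant rotational transform. The key step is to choose the free function $F$ to enforce this: set
\[
F(s)=\frac{2\pi n}{m}\Bigl(\oint_{\gamma_s}\frac{d\ell}{r|\nabla\psi|}\Bigr)^{-1}.
\]
This choice is only consistent if $F$ does not feed back into $\psi$. We arrange that by making the equation decouple, either with $F^2$ in the affine form above or with $F'$ small. The most natural route is to find $\psi$ first with $F$ unspecified. Decoupling holds when $(F^2)'$ is constant, but our prescribed $F$ is generally not of the form $\sqrt{b+cs}$. Hence we set this up as a fixed-point problem in $(\psi,F)$: $\psi$ solves the Grad--Shafranov equation with $F^2$ given, and $F$ is defined from $\psi$ by the formula above.

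The main obstacle is the regularity of $s\mapsto\oint_{\gamma_s}\frac{d\ell}{r|\nabla\psi|}$ at the magnetic axis, together with contraction in $C^{k,\alpha}$. Near a nondegenerate minimum, in Morse coordinates, this period tends to $2\pi/(r_*\sqrt{\det D^2\psi})$ and is a smooth function of $s$ (standard for periods of Hamiltonian flows near a nondegenerate center). So $F$ is smooth, and $F(\psi)$ is $C^{k,\alpha}$ in space. Scaling to a tube of radius $\varepsilon$ about the axis makes $(F^2)'$ contribute an $O(\varepsilon^2)$ perturbation relative to the principal part, which is what yields a contraction for $\varepsilon<\varepsilon_0$. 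This smallness does not depend on $m$ and $n$, since $n/m$ only rescales $F$ and the $(F^2)'$ term can be absorbed. The sign of $m$ is realized through the orientation of the poloidal flow, replacing $\psi$ by $-\psi$. The contraction also determines the axis location $1+\varepsilon^2+O(\varepsilon^3)$.

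Finally we read off the conclusions. $B$ is nowhere zero because $F\neq0$. The boundary conditions $B\cdot N=0$ and $P$ constant on $\partial\Omega_\varepsilon$ hold because $\partial\Omega_\varepsilon$ is a level set of $\psi$. A curve in the class $m\mu+n\lambda$ gives $\operatorname{lk}(\Gamma,C)=-m$ by the chosen orientations; the sign comes from the counterclockwise orientation of $\mu$ viewed from $\{x>0\}$. The Seifert fibration statement follows because every orbit is closed and the axis is the only fibre with a different period.
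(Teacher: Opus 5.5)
\paragraph{Verdict.} There is a genuine gap at the one step that carries the whole difficulty: producing a constant rotational transform.

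\paragraph{Why the contraction fails.} Your contraction rests on the claim that $(F^2)'$ is an $O(\varepsilon^2)$ perturbation of the principal part, and this does not hold.
\begin{itemize}
\item \emph{Size of the coupling term.} On the tube, $\psi$ ranges over an interval of length $\varepsilon^2/2$, while $\Delta^*\psi=O(1)$. Suppose the period $\tau(s)=\oint_{\gamma_s}\frac{d\ell}{r|\nabla\psi|}$ has relative variation $\delta$ across the tube. Then your $F^2=4\pi^2\kappa^2/\tau^2$, with $\kappa=n/m$, varies by about $\kappa^2\delta$ over that interval, so $(F^2)'\sim\kappa^2\delta/\varepsilon^2$.
\item \emph{Circularity.} Hence $(F^2)'$ is $O(\varepsilon^2)$ only if $\tau$ is already constant up to $O(\varepsilon^4/\kappa^2)$. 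That is precisely what has to be constructed.
\item \emph{Lipschitz constant and derivative loss.} A perturbation $\varepsilon^2\delta\Psi$ of $\psi$ changes $\Delta^*\psi$ by $O(\delta\Psi)$ but changes $\tau$ by a relative $O(\delta\Psi)$. So the feedback $\psi\mapsto(F^2)'(\psi)$ has Lipschitz constant of order $\kappa^2/\varepsilon^2$. It also involves two $s$-derivatives of the enclosed-area function, so the map is neither small nor free of derivative loss.
\item \emph{Uniformity in $m,n$.} $(F^2)'$ scales like $\kappa^2$, which is unbounded over coprime pairs, and it sits inside the elliptic equation. It therefore cannot be ``absorbed'', and any threshold you obtain would depend on $m,n$, contrary to the statement.
\item \emph{Lost freedom.} Freezing $H$ to be linear discards the free function that is actually needed.
\end{itemize}

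\paragraph{The missing idea.} Impose the period condition geometrically, not through $F$.
\begin{itemize}
\item By the coarea formula, $\tau(s)=\frac{d}{ds}\int_{\{\psi<s\}}\frac{dr\,dz}{r}$. In the variables $r=e^{\varepsilon x}$, $z=\varepsilon y$ one has $\frac{dr\,dz}{r}=\varepsilon^2\,dx\,dy$.
\item So take $\psi=\varepsilon^2\Psi_0\circ\gamma^{-1}$ with $\Psi_0=\frac12(x^2+y^2)$ and $\gamma$ an area-preserving diffeomorphism of the disk. Then $\int_{\{\psi<s\}}\frac{dr\,dz}{r}=2\pi s$, and $\tau\equiv 2\pi$ exactly. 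Equivalently, $r^2B=X_{\Psi_\varepsilon}+\kappa\partial_\phi$, with all nonconstant orbits of $X_{\Psi_\varepsilon}$ of period $2\pi$.
\item One can then take $F\equiv\kappa$ constant. Then $\kappa$ drops out of the Grad--Shafranov equation entirely, which is where uniformity in $m,n$ comes from. The free function $H$ absorbs the radial part of $L_\varepsilon\Psi$.
\item The remaining problem is to find an area-preserving $\gamma$ near the identity for which $(L_\varepsilon\Psi)\circ\gamma$ is radial. The paper solves this by the implicit function theorem. Its linearization $T\hat v=(\operatorname{div}\hat v,\mathcal R\operatorname{curl}_2\hat v,\Pi_\partial(\hat v\cdot\nu))$ is inverted on spaces with $\mathcal R$-graph norms, chosen precisely to avoid loss of derivatives.
\end{itemize}

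\paragraph{What survives.} Your analysis of the period near the axis, and your final reading-off of the linking number and the Seifert structure, are fine once a constant transform is available. The construction that provides it is missing.
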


\begin{re}
Taking $(m,n)=(N,1)$ gives arbitrary core linking $N$ without an
exceptional fibre. Taking $(m,n)=(3,2)$ gives noncore trajectories of
trefoil type and a core of multiplicity two.
\end{re}

The domain $\Omega_\varepsilon$ in Theorem~\ref{prop:sf-existence} is a neighborhood of the circle $\{r=1,z=0\}$ of thickness $O(\varepsilon)$, while the MHS equilibrium (and its derivatives) is bounded by $O(\varepsilon^2)$. We want to emphasize that the smallness threshold $\varepsilon_0$, the domain $\Omega_\varepsilon$ and the pressure $P$ do not depend on $m$, $n$. 

The idea of the proof consists in constructing solutions to the Grad-Shafranov equation that in suitable local coordinates $(X,Y)$ are diffeomorphic to $\frac12(X^2+Y^2)$, and the diffeomorphism is area-preserving. This is obtained using an implicit function theorem in Banach spaces (see Section~\ref{S:elliptic}). With the help of this diffeomorphism and a suitable nonconstant scaling, $\Omega_\varepsilon$ transforms onto $D\times \mathbb S^1$ (with $D$ the unit disk) and $B$ reads as 
$$
(-Y\partial_X+X\partial_Y)+\kappa\partial_\phi\,,
$$
with $\kappa:=n/m$. From this expression, the claimed Seifert structure for the magnetic field $B$ becomes clear. In fact, the construction works for any real number $\kappa$, so taking $\kappa\not\in\mathbb Q$, we obtain MHS equilibria whose orbits are ergodic on each toroidal level set of $P$, except for the core line $C$, which is a periodic orbit of $B$. Taking $\kappa=0$ we obtain MHS equilibria all whose orbits are closed and poloidal. See Section~\ref{S:proof} for a detailed proof of Theorem~\ref{prop:sf-existence}.


\section{Solutions to a semilinear elliptic PDE using area-preserving diffeomorphisms}\label{S:elliptic}

\noindent In what follows, fix an integer $s\geq 2$. In this section we work on the unit disk, which we parametrize by 
$$D:=\{q=(x,y):x^2+y^2<1\}\,,$$ 
and denote the symplectic matrix as $J$ and the angular derivative as $\mathcal R$, i.e., 
\[
 J:=\begin{pmatrix}0&-1\\1&0\end{pmatrix},\qquad
 \mathcal R:=-y\partial_x+x\partial_y.
\]
When acting on vector fields, the operator $\mathcal R$ acts componentwise. Write
$$R_t:=\begin{pmatrix}
\cos t & -\sin t\\ \sin t & \cos t
\end{pmatrix}$$ 
as the counterclockwise rotation, and define the averaging operators
\begin{equation}\label{eq:sf-projections}
 (\mathcal{P}a)(q):=\frac1{2\pi}\int_0^{2\pi}a(R_tq)\,dt,
 \qquad \mathcal Q:=I-\mathcal P\,,
\end{equation}
for $a\in C^{s,\alpha}(\overline D)$, which are bounded operators on these spaces. On the unit circle $\partial D$, $\Pi_\partial$ denotes
subtraction of the boundary mean, i.e.,
$$
(\Pi_\partial a)(q):=a(q) - \frac{1}{2\pi}\int_0^{2\pi}a(\cos t, \sin t)dt\, , \qquad (q \in \partial D).
$$

For every function $g\in C^{s,\alpha}(\overline D)$ that satisfies $\mathcal Pg=0$, let us set
\begin{equation}\label{eq:sf-angular-inverse}
 (\mathcal Sg)(q):=\frac1{2\pi}\int_0^{2\pi}t\,g(R_tq)\,dt.
\end{equation}
In this case, since $\partial_t g(R_tq)=\mathcal Rg(R_tq)$, integration by parts yields
\[
\mathcal R\mathcal Sg=g\,,
\]
and, moreover, $\mathcal P\mathcal Sg=0$ because $\mathcal P$ and $\mathcal S$ commute.
It is also clear that $\mathcal S$ is bounded on $C^{s,\alpha}(\overline D)$.

In the proof of the proposition below, we will use an implicit function theorem in Banach spaces applied to a suitable operator. To this end, we introduce the following Banach spaces
\begin{align*}
 \mathcal X_s
 &:=\{v\in C^{s+1,\alpha}(\overline D;\mathbb R^2):
       \mathcal Rv\in C^{s+1,\alpha},
       \mathcal P(\operatorname{curl}_2v)=0\},\\
 \mathcal A_s
 &:=\{a\in C^{s,\alpha}(\overline D):
                         \mathcal Ra\in C^{s,\alpha}\},\\
 \mathcal G_s
 &:=\{g\in C^{s,\alpha}(\overline D):\mathcal Pg=0\},\\
 \mathcal H_s
 &:=\{h\in C^{s+2,\alpha}(\partial D):
                         \textstyle\int_{\partial D}h\,d\ell=0\}.
\end{align*}
Here 
$$\operatorname{curl}_2v:=\partial_xv_2-\partial_yv_1\,,
$$
and we endow the first two spaces with their graph norms, e.g.,
$$\|v\|_{\mathcal X_s}:=\|v\|_{C^{s+1,\alpha}}
+\|\mathcal Rv\|_{C^{s+1,\alpha}}\,.$$
\begin{re}\label{R1}
Since the tangential derivative of $v\in\mathcal X_s$ on $\partial D$ is of class $C^{s+1,\alpha}(\partial D)$ we infer that $v|_{\partial D}$ is a function of the circle of class $C^{s+2,\alpha}(\partial D)$. 
\end{re}

Next we introduce the following auxiliary elliptic operator, which is an $\varepsilon$-perturbation of the Laplacian $\Delta=\partial_{xx}+\partial_{yy}$:
\begin{equation}\label{eq:sf-operator}
 L_\varepsilon
 :=e^{-4\varepsilon x}(\partial_{xx}-2\varepsilon\partial_x)
       +e^{-2\varepsilon x}\partial_{yy}.
\end{equation}
As we will see in the next section, this operator is essentially the Grad-Shafranov operator after an explicit $\varepsilon$-dependent change of variables. 

The following proposition shows that for any $\varepsilon>0$ small enough, the radial function 
$$
\Psi_0=\frac12(x^2+y^2)
$$
can be deformed with an area-preserving diffeomorphism of the disk so that the new function $\Psi_\varepsilon$ satisfies the elliptic equation
$$
L_\varepsilon\Psi_\varepsilon
                 =f_\varepsilon(\Psi_\varepsilon)
$$
for some real-valued function $f_\varepsilon$, in the spirit of the ideas introduced by
Constantin, Drivas, and Ginsberg~\cite[Theorem 3.1]{CDG2021}.

\begin{pr}\label{lem:sf-deformation}
For each fixed $l\geq1$ and $0<\alpha<1$, and all sufficiently small
$\varepsilon>0$, there exist a diffeomorphism
$\gamma_\varepsilon:\overline D\to\overline D$ of class
$C^{2l+1,\alpha}$ and a function
$f_\varepsilon\in C^{l-1,\alpha}([0,1/2])$ such that
\begin{equation}\label{eq:sf-deformation}
 \det D\gamma_\varepsilon=1,\qquad
 L_\varepsilon\Psi_\varepsilon
                 =f_\varepsilon(\Psi_\varepsilon),
\end{equation}
with $\Psi_\varepsilon:=\Psi_0\circ\gamma_\varepsilon^{-1}\in C^{2l+1,\alpha}$.
The map $\gamma_\varepsilon$ is close to the identity, and
$\Psi_\varepsilon$ has a unique nondegenerate minimum at
$q_\varepsilon=\gamma_\varepsilon(0)$. Furthermore,
\begin{equation}\label{eq:sf-hamiltonian-conjugacy}
 (\gamma_\varepsilon)_*X_{\Psi_0}=X_{\Psi_\varepsilon}\,,
\end{equation}
where $X_\Psi:=J\nabla\Psi$, so every nonconstant orbit of $X_{\Psi_\varepsilon}$ has minimal
period $2\pi$.
\end{pr}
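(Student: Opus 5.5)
We look for $\gamma_\varepsilon$ as a perturbation of the identity parametrized so that the area-preserving constraint is built in. The relevant setting is the spaces $\mathcal X_s$, $\mathcal A_s$, $\mathcal G_s$, $\mathcal H_s$ with $s=2l$. The key observation is that $\Psi_\varepsilon=\Psi_0\circ\gamma_\varepsilon^{-1}$ satisfies $L_\varepsilon\Psi_\varepsilon=f_\varepsilon(\Psi_\varepsilon)$ for some $f_\varepsilon$ exactly when $(L_\varepsilon\Psi_\varepsilon)\circ\gamma_\varepsilon$ is a function of $|q|^2$. Equivalently, it must be rotation invariant:
\[
\mathcal Q\big[(L_\varepsilon\Psi_\varepsilon)\circ\gamma_\varepsilon\big]=0 .
\]
We also need $\gamma_\varepsilon(\overline D)=\overline D$. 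Since $\Psi_0=\tfrac12$ on $\partial D$, this amounts to the boundary condition $\Pi_\partial\big(\Psi_0\circ\gamma_\varepsilon\big)|_{\partial D}=0$ together with the normalization that the constant value is $1/2$; the latter can be enforced by the area constraint.

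We write $\gamma=\mathrm{id}+v$ with $v\in\mathcal X_s$ and impose $\det D\gamma=1$. To first order this is $\di v=0$. A convenient way to keep exact area preservation is to take $\gamma$ as the time-one map of a divergence-free field, or to solve $\det(I+Dv)=1$ via a stream function. I would instead consider the nonlinear map
\[
\mathcal F(\varepsilon,v)=\Big(\det(I+Dv)-1,\ \mathcal Q\big[(L_\varepsilon(\Psi_0\circ\gamma^{-1}))\circ\gamma\big],\ \Pi_\partial\big(\Psi_0\circ\gamma\big)|_{\partial D}\Big)
\]
with values in a product of the form $(\text{mean-zero }C^{s,\alpha})\times\mathcal G_{s}\times\mathcal H_s$. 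Here $(L_\varepsilon\Psi)\circ\gamma$ is computed by pulling $L_\varepsilon$ back to an operator in $\gamma$-coordinates, which avoids inverting $\gamma$ explicitly. At $\varepsilon=0$ we have $\mathcal F(0,0)=0$, since $\Delta\Psi_0=2$ is radial.

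The linearization at $(0,0)$ splits as follows. The first component is $\di v$. The boundary component is $\Pi_\partial(q\cdot v)|_{\partial D}$. The main component arises from $\Psi_0\circ\gamma^{-1}\approx\Psi_0-q\cdot v$, so $L_0$ linearizes to $-\mathcal Q\Delta(q\cdot v)$ plus transport terms that vanish because $\Delta\Psi_0$ is constant. Decompose $v=\nabla a+J\nabla b$, Hodge-style. Divergence then prescribes $\Delta a$, the boundary term prescribes the nonradial part of $\partial_r a+\partial_\theta b$, and the interior term $\mathcal Q\Delta(q\cdot v)$ involves $x\partial_x b$-type terms. The nonradial modes can be inverted Fourier mode by Fourier mode using $\mathcal R$ and $\mathcal S$. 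This is where $\mathcal X_s$ is used: the condition $\mathcal P(\operatorname{curl}_2v)=0$ kills the kernel given by radial rotations $v=g(|q|)Jq$, which are area preserving and leave $\Psi_0$ invariant. The extra regularity of $\mathcal Rv$ compensates for the loss of a derivative in the angular inversion.

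Checking that this linearization is an isomorphism $\mathcal X_s\to$ target is the main obstacle. It requires the correct elliptic estimates for a mixed system with boundary data, and it requires tame control of $\mathcal F$, in particular $C^1$ dependence in $v$ given the composition with $\gamma^{-1}$. This is also why $\gamma_\varepsilon$ only ends up in $C^{2l+1,\alpha}$ while $f_\varepsilon$ loses derivatives. I would handle this by working with $\gamma$-pulled-back operators, whose coefficients depend polynomially on $Dv$ and $D^2v$, and by verifying the linear problem on each Fourier mode $e^{ik\theta}$, $k\neq0$, where it reduces to an ODE boundary value problem in $r$. Once invertibility holds, the implicit function theorem gives $v_\varepsilon$ with $\|v_\varepsilon\|=O(\varepsilon)$.

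The remaining claims follow. The rotation-invariant function $(L_\varepsilon\Psi_\varepsilon)\circ\gamma_\varepsilon=\mathcal P(\cdot)$ equals $f_\varepsilon(|q|^2/2)$ for some $f_\varepsilon$. Its regularity as a function of $\Psi_0\in[0,1/2]$ loses half of the derivatives at the origin: a smooth radial function is a function of $|q|^2$ with about half the regularity. This gives $f_\varepsilon\in C^{l-1,\alpha}$. Since $\det D\gamma_\varepsilon=1$, the map $\gamma_\varepsilon$ is symplectic, so $\gamma_{\varepsilon*}(J\nabla\Psi_0)=J\nabla(\Psi_0\circ\gamma_\varepsilon^{-1})$, which is \eqref{eq:sf-hamiltonian-conjugacy}. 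The orbits of $X_{\Psi_0}$ are circles of period $2\pi$, hence so are those of $X_{\Psi_\varepsilon}$. The unique nondegenerate minimum at $\gamma_\varepsilon(0)$ is inherited from $\Psi_0$, because $\gamma_\varepsilon$ is a diffeomorphism.
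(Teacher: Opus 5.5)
Your overall scheme matches the paper's: implicit function theorem for $\gamma=\mathrm{id}+v$, $v\in\mathcal X_s$, radiality imposed by $\mathcal Q$, boundary mean removed and radius fixed by area, then symplectic conjugacy and Whitney. However, the nonlinear map you propose does not work in these spaces, and the invertibility you leave open is not a technicality.

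\textbf{The derivative loss.} Your second component is the exact residual $(L_\varepsilon\Psi)\circ\gamma$, $\Psi=\Psi_0\circ\gamma^{-1}$, evaluated for all $v$. With $A=I+Dv$ and $W=-J\mathcal R\gamma\in C^{s+1,\alpha}$, one has $(\nabla\Psi)\circ\gamma=A^{-T}q=W/\det A$. Hence $(D^2\Psi)\circ\gamma=D(W/\det A)\,A^{-1}$ contains $\nabla(\det A)$, whose radial part involves second derivatives of $v$ that are only $C^{s-1,\alpha}$ for $v\in\mathcal X_s$. This appears in your own linearization. Using $\Delta(q\cdot v)=(r\partial_r+2)\operatorname{div}v-\mathcal R\operatorname{curl}_2v$, the $v$-derivative of your second component at $(0,0)$ is
\[
\mathcal R\operatorname{curl}_2v-(r\partial_r+2)\,\mathcal Q\operatorname{div}v .
\]
For a counterexample, take $v=\nabla e$ with $\Delta e=a:=x\chi(|q|)$ and $\partial_\nu e=0$. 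Here $\chi$ is supported in an annulus and is $C^{s,\alpha}$ but not $C^{s+1}$, and $\int_Da=0$ by oddness. Then $a\in\mathcal A_s$, and $v\in\mathcal X_s$ by Schauder estimates for $e$ and for $\mathcal Re$, yet $r\partial_ra\notin C^{s,\alpha}$. So your $\mathcal F$ does not map $\mathcal X_s$ into $\mathcal G_s$. Computing through a pulled-back operator does not help, because its first-order coefficients carry exactly this $D^2v$ term.

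Separately, the first component cannot take values in mean-zero functions: $\int_D\operatorname{div}v=\int_{\partial D}v\cdot\nu$ is free because the boundary component is mean-subtracted. The correct target is $\mathcal A_s$, whose $\mathcal R$-graph norm is what makes the inverse land in $\mathcal X_s$.

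\textbf{The missing idea} is to build the constraint into the residual. When $\det A=1$ one has $A^{-T}=-JAJ$, so $(\nabla\Psi)\circ\gamma=W$ and $(D^2\Psi)\circ\gamma=(DW)A^{-1}=:M$. The paper defines $E_\varepsilon(v)$ by inserting $W$ and $M$ into $L_\varepsilon$, with coefficients $e^{-4\varepsilon\gamma_1}$ and $e^{-2\varepsilon\gamma_1}$, for \emph{every} $v$. This agrees with $(L_\varepsilon\Psi)\circ\gamma$ on the zero set of the first component, which is all that is used afterwards. But it depends only on $Dv$ and $D\mathcal Rv$, both in $C^{s,\alpha}$, so $\mathcal QE_\varepsilon$ is a smooth map $\mathcal X_s\to\mathcal G_s$.

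Its linearization is exactly $\mathcal R\operatorname{curl}_2\hat v$, decoupled from $\operatorname{div}\hat v$; this uses $\operatorname{curl}_2\mathcal R\hat v=\mathcal R\operatorname{curl}_2\hat v+\operatorname{div}\hat v$. So $T\hat v=(\operatorname{div}\hat v,\mathcal R\operatorname{curl}_2\hat v,\Pi_\partial(\hat v\cdot\nu))$ is inverted by $v=\nabla e+J\nabla b$, where
\[
\Delta b=\mathcal Sg,\quad b|_{\partial D}=0,\qquad \Delta e=a,\quad \partial_\nu e=h+\lambda,\quad \lambda=(2\pi)^{-1}\int_Da .
\]
The $\mathcal R$-regularity follows because $\mathcal R$ commutes with $\Delta$ and with both boundary conditions. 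No Fourier-mode ODE analysis or tame estimates are needed. Without this substitution, or an exact parametrization of area-preserving maps (which you mention but set aside), the implicit-function step of your proposal fails.
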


\begin{proof}
Let $s\geq2$ be an arbitrary integer.
For $v\in\mathcal X_s$ (which later in the proof will be taken small enough), set
\[
 \gamma(q):=q+v(q),\qquad A:=D\gamma,\qquad
 W:=-J\mathcal R\gamma,\qquad M:=(DW)A^{-1}\,.
\]
(We use the Jacobian convention $(D\gamma)_{ij}=\partial_j\gamma_i$.)
Consider the ``error function''
\begin{equation}\label{eq:sf-residual}
 E_\varepsilon(v)
 :=e^{-4\varepsilon\gamma_1}(M_{11}-2\varepsilon W_1)
             +e^{-2\varepsilon\gamma_1}M_{22}\,,
\end{equation}
where the subscripts in $M$ and $W$ indicate the corresponding matrix components. The function $E_\varepsilon$ is related to the operator $L_\varepsilon$ as follows.
Assume that $\det A=1$ and set $\Psi:=\Psi_0\circ\gamma^{-1}$, then
\[
 W=-J\mathcal R\gamma=-JAJq=A^{-T}q=(\nabla\Psi)\circ\gamma,
\]
where we have used that $A^{-T}=-JAJ$ because $\det A=1$, and $(D\gamma)^T \nabla\Psi\circ \gamma = q$ (take the derivative in the definition of $\Psi$). From this identity and the definition of $M$, it is straightforward that 
$$M=(D^2\Psi)\circ\gamma\,.$$ 
Plugging these equations into the definition of $E_\varepsilon(v)$ we obtain
$$E_\varepsilon(v)=(L_\varepsilon\Psi)\circ\gamma$$ 
whenever $\det A=1$, which establishes the aforementioned relation between $E_\varepsilon$ and $L_\varepsilon$. 

Next, define a map
$\mathcal F: \mathcal X_s\times \mathbb R\to \mathcal A_s\times\mathcal G_s\times\mathcal H_s$ by
\begin{equation}\label{eq:sf-nonlinear-map}
 \mathcal F(v,\varepsilon)
 :=\left(\det(I+Dv)-1,
        \mathcal Q E_\varepsilon(v),
        \Pi_\partial\left.
             \frac{|q+v|^2-1}{2}\right|_{\partial D}\right).
\end{equation}
This is a smooth map of Banach spaces near
$(0,0)$. Indeed, let us check that the target Banach space is $\mathcal A_s\times\mathcal G_s\times\mathcal H_s$. First, the graph norm of the first component is controlled using
\[
 \mathcal R(Dv)=D(\mathcal Rv)-(Dv)J\,,
\]
and hence $\det(D\gamma)=\det(I+Dv)\in \mathcal A_s$ because $v\in \mathcal X_s$. Second, since $DW=-JD\mathcal R\gamma$ lies in
$C^{s,\alpha}$, while $A^{-1}$ and all coefficients in
\eqref{eq:sf-residual} are smooth functions of quantities in $C^{s,\alpha}$, we infer that $\mathcal Q E_\varepsilon(v)\in \mathcal G_s$. 
Finally, $v|_{\partial D}$ has regularity $C^{s+2,\alpha}$ by Remark~\ref{R1}, and hence $\Pi_\partial\left.
             \frac{|q+v|^2-1}{2}\right|_{\partial D}\in \mathcal H_s$. 

Now, we notice that at $(v,\varepsilon)=(0,0)$, we have $W=q$, $M=I$ and $E_0(0)=2$, so
$$\mathcal F(0,0)=(0,0,0)\,.$$ 
Our goal is to solve the equation $\mathcal F(v,\varepsilon)=(0,0,0)$ near $(0,0)$. To this end, let us compute the derivative of the map $\mathcal F$ at $(0,0)$ in the $v$ direction, i.e.,
$$
D_v\mathcal F(0,0)[\hat v]\,,
$$
for $\hat v\in \mathcal X_s$.
The derivatives of its first and third components are straightforward, and yield
\[
\operatorname{div}\hat v\,,\qquad \Pi_\partial(\hat v\cdot\nu)\,,
\]
where $\nu=q|_{\partial D}$ is the outward unit normal vector on $\partial D$. The derivative of its second component is
obtained from
\begin{align*}
 D_vE_\varepsilon(v)|_{(0,0)}[\hat v]
 &= D_v(\operatorname{tr}M)|_{v=0}[\hat v]=\operatorname{tr}\big(D_v M|_{v=0}[\hat v]\big)
 \\&=\operatorname{tr}\bigl(D(-J\mathcal R\hat v)-D\hat v\bigr)\\
 &=\operatorname{curl}_2(\mathcal R\hat v)-\operatorname{div}\hat v
 =\mathcal R\operatorname{curl}_2\hat v\,,
\end{align*}
which has zero rotational average, so the operator $\mathcal Q$ acts on this object as the identity. We therefore conclude that
\[
D_v\mathcal F(0,0)[\hat v]=\big(\operatorname{div}\hat v,\mathcal R\operatorname{curl}_2\hat v,\Pi_\partial(\hat v\cdot\nu)\big)=:T\hat v\,.
\]
This is exactly the map $T:\mathcal X_s\to \mathcal A_s\times\mathcal G_s\times\mathcal H_s$ in
Lemma~\ref{lem:sf-linear} below, which is an isomorphism of Banach spaces. We can therefore apply the Banach implicit function theorem to conclude that for any small enough $\varepsilon$, there exists a function $v_\varepsilon\in\mathcal X_s$ (small in its graph
norm), with 
\begin{equation}\label{fZERO}
  \mathcal F(v_\varepsilon,\varepsilon)=(0,0,0)\,.  
\end{equation}

Let us now check that the map $\gamma_\varepsilon=q+v_\varepsilon$
is a diffeomorphism of the disk onto itself. Indeed, smallness of $v_\varepsilon$ in $C^{s+1,\alpha}$ makes $\gamma_\varepsilon$ an
orientation-preserving embedding of $\overline D$. The third equation in \eqref{fZERO}
$$
\Pi_\partial\left.
             \frac{|q+v_\varepsilon|^2-1}{2}\right|_{\partial D}=0
$$
says that $|\gamma_\varepsilon|^2$ is constant on $\partial D$. Thus $\partial \gamma_\varepsilon(\overline D)$ is a circle centred at the origin, and the image domain is its
interior. The first equation in \eqref{fZERO} gives 
$$
\det D\gamma_\varepsilon=1\,,
$$
so the area of the image disk is $|D|=\pi$. The image circle consequently has radius
one, proving that $\gamma_\varepsilon(D)=D$.

Next observe that the second equation in  \eqref{fZERO}
$$
 \mathcal Q E_\varepsilon(v_\varepsilon)=0
$$
says that
$a_\varepsilon:=E_\varepsilon(v_\varepsilon)$ is a radial function on the disk $D$. Therefore, it is of the form
\[
 a_\varepsilon(q)=f_\varepsilon(|q|^2/2)\,,
\]
for some real-valued function $f_\varepsilon$. As we observed above, since $\det(D\gamma_\varepsilon)=1$, setting $\Psi_\varepsilon:=\Psi_0\circ \gamma_\varepsilon^{-1}$, we infer that
$$
 L_\varepsilon\Psi_\varepsilon
                    =f_\varepsilon(\Psi_\varepsilon)\,.$$
It remains to check the regularity of the function $f_\varepsilon$. To this end, it is convenient to take $s=2l$ for any integer $l\geq 1$.
By the previous considerations, the function $h_\varepsilon(t):=a_\varepsilon(t,0)$ is even and of class
$C^{2l,\alpha}([-1,1])$. Whitney's theorem~\cite{Wh} then implies that there is a function $f_\varepsilon$ of class $C^{l-1,\alpha}([0,1/2])$ such that $h_\varepsilon(t)=f_\varepsilon(t^2/2)$. This function $f_\varepsilon$ is the same as the one defined above.

To complete the proof of the proposition, we notice that being $\gamma_\varepsilon$ an area-preserving diffeomorphism, it is  symplectic for the standard symplectic form $dx\wedge dy$, proving~\eqref{eq:sf-hamiltonian-conjugacy}.
The unique minimum and the period statement follow from those of
$\Psi_0$ and $X_{\Psi_0}=\mathcal R$.
\end{proof}

To finish this section, it remains to show that the linear map $T$ defined in the proof of Proposition~\ref{lem:sf-deformation} is an isomorphism of the involved Banach spaces. This is the content of the following lemma:

\begin{lm}\label{lem:sf-linear}
The map $T:\mathcal X_s \to \mathcal A_s\times\mathcal G_s\times\mathcal H_s$ defined as
\begin{equation}\label{eq:sf-linear-map}
 Tv:=\bigl(\operatorname{div}v,
          \mathcal R\operatorname{curl}_2v,
          \Pi_\partial(v\cdot\nu)\bigr)
\end{equation}
is a bounded linear isomorphism from $\mathcal X_s$ onto
$\mathcal A_s\times\mathcal G_s\times\mathcal H_s$.
\end{lm}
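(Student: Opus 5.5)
The plan is to invert $T$ explicitly with a Hodge-type decomposition on the disk. Every $v\in\mathcal X_s$ will be written as
\[
v=\nabla\varphi+J\nabla\chi, \qquad \chi|_{\partial D}=0 .
\]
Then $\operatorname{div}v=\Delta\varphi$ and $\operatorname{curl}_2 v=\Delta\chi$. On $\partial D$ we have $v\cdot\nu=\partial_\nu\varphi$, because $J\nabla\chi\cdot\nu$ is (up to sign) the tangential derivative of $\chi$, which vanishes.

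\emph{Boundedness.} First I check that $T$ maps $\mathcal X_s$ boundedly into the target, using the commutation rules
\[
\mathcal R\,\operatorname{div}v=\operatorname{div}(\mathcal Rv), \qquad \mathcal R\,\operatorname{curl}_2 v=\operatorname{curl}_2(\mathcal Rv).
\]
These hold because $\mathcal R$ acting componentwise differs from the Lie derivative of $v$ by the term $Jv$, which drops out of both div and curl. It follows that $\operatorname{div}v\in\mathcal A_s$. Also $\mathcal R\operatorname{curl}_2v$ has zero rotational average, so it lies in $\mathcal G_s$. Finally $\Pi_\partial(v\cdot\nu)\in\mathcal H_s$ by Remark~\ref{R1}.

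\emph{Injectivity.} Suppose $Tv=0$. Since $v\in\mathcal X_s$ we have $\mathcal P\operatorname{curl}_2v=0$, and $\mathcal R\operatorname{curl}_2v=0$ says $\operatorname{curl}_2 v$ is radial; together these give $\operatorname{curl}_2v=0$. Next, $\Pi_\partial(v\cdot\nu)=0$ means $v\cdot\nu$ is a constant $c$ on $\partial D$. The divergence theorem gives $2\pi c=\int_D\operatorname{div}v=0$, so $c=0$. Thus $v$ is curl-free and divergence-free on the simply connected disk, hence $v=\nabla\varphi$ with $\varphi$ harmonic and $\partial_\nu\varphi=0$. Therefore $\varphi$ is constant and $v=0$.

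\emph{Surjectivity.} Given $(a,g,h)$, set $\omega:=\mathcal Sg$, so that $\mathcal R\omega=g$ and $\mathcal P\omega=0$. Solve the Dirichlet problem
\[
\Delta\chi=\omega, \qquad \chi|_{\partial D}=0,
\]
and the Neumann problem
\[
\Delta\varphi=a, \qquad \partial_\nu\varphi=h+c, \qquad c:=\tfrac1{2\pi}\int_Da .
\]
The constant $c$ is exactly the compatibility condition, since $\int_{\partial D}h=0$. Put $v=\nabla\varphi+J\nabla\chi$. Then
\[
Tv=(a,\,\mathcal R\omega,\,\Pi_\partial(h+c))=(a,g,h),
\]
and $\mathcal P\operatorname{curl}_2v=\mathcal P\omega=0$. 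Schauder theory gives $\chi\in C^{s+2,\alpha}$ and $\varphi\in C^{s+2,\alpha}$; the latter uses $h\in C^{s+2,\alpha}\subset C^{s+1,\alpha}$. Hence $v\in C^{s+1,\alpha}$, with bounds in terms of $\|a\|_{C^{s,\alpha}}$, $\|g\|_{C^{s,\alpha}}$ and $\|h\|_{C^{s+2,\alpha}}$. The bounded inverse then also follows from the open mapping theorem.

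The main obstacle is the extra condition $\mathcal Rv\in C^{s+1,\alpha}$ in $\mathcal X_s$. The key fact is that $\mathcal R$ commutes with $\Delta$ and with $\partial_\nu$ on the disk, and preserves the Dirichlet condition. Componentwise, $\mathcal R\nabla\varphi=\nabla(\mathcal R\varphi)+J\nabla\varphi$ up to sign conventions, and similarly for $J\nabla\chi$. So it suffices to show $\mathcal R\varphi,\mathcal R\chi\in C^{s+2,\alpha}$. The function $\mathcal R\chi$ solves
\[
\Delta(\mathcal R\chi)=\mathcal R\mathcal Sg=g\in C^{s,\alpha}, \qquad \mathcal R\chi|_{\partial D}=0,
\]
so Schauder gives $\mathcal R\chi\in C^{s+2,\alpha}$. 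The function $\mathcal R\varphi$ solves
\[
\Delta(\mathcal R\varphi)=\mathcal Ra\in C^{s,\alpha}, \qquad \partial_\nu(\mathcal R\varphi)=\partial_\theta h\in C^{s+1,\alpha}.
\]
Here the $C^{s+2,\alpha}$ regularity of $h$ is used, and the mean-zero compatibility is automatic. Hence $\mathcal R\varphi\in C^{s+2,\alpha}$. To make this rigorous I would justify the commutation by differentiating the solution operators along the rotation group $R_t$, using the rotation invariance of both boundary value problems. This yields the graph-norm estimate for the inverse.
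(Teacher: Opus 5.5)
Your proof is correct and follows essentially the same route as the paper. It uses the same Dirichlet problem for $\mathcal S g$, the same Neumann problem with data $h+\frac{1}{2\pi}\int_D a$, the field $v=\nabla\varphi+J\nabla\chi$, the same injectivity argument, and the same commutation of $\mathcal R$ with $\Delta$ and $\partial_\nu$ to control $\mathcal R v$.

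There is one small slip in your boundedness step: the term $Jv$ does not drop out, since $\operatorname{div}(Jv)=-\operatorname{curl}_2 v$ and $\operatorname{curl}_2(Jv)=\operatorname{div}v$. The correct identities (the trace of $\mathcal R(Dv)=D(\mathcal Rv)-(Dv)J$) are $\mathcal R\operatorname{div}v=\operatorname{div}(\mathcal Rv)+\operatorname{curl}_2v$ and $\mathcal R\operatorname{curl}_2v=\operatorname{curl}_2(\mathcal Rv)-\operatorname{div}v$. These still give $\operatorname{div}v\in\mathcal A_s$ and $\mathcal R\operatorname{curl}_2 v\in C^{s,\alpha}$, so your conclusion stands.
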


\begin{proof}
Let us first prove that it is onto. Given $(a,g,h)\in \mathcal A_s\times\mathcal G_s\times\mathcal H_s$, put $w:=\mathcal Sg$ and
$\lambda:=(2\pi)^{-1}\int_Da(q)\,dq$. The boundary value problems
\begin{alignat*}{3}
 \Delta b&=w\quad&&\text{in }D,
 &\qquad b&=0\quad\text{on }\partial D,\\
 \Delta e&=a\quad&&\text{in }D,
 &\qquad \partial_\nu e&=h+\lambda\quad\text{on }\partial D,
\end{alignat*}
with the normalization $\int_De=0$, have unique solutions. For the first equation it is immediate, and for the second equation one only has to observe that the Neumann compatibility condition holds
because $\int_{\partial D}hd\ell=0$ and $|\partial D|=2\pi$.

Set $v:=\nabla e+J\nabla b$. Then
\[
 \operatorname{div}v=a,\quad
 \operatorname{curl}_2v=w,\quad
 v\cdot\nu=h+\lambda\ \text{on }\partial D.
\]
In particular, $\mathcal P\operatorname{curl}_2v=\mathcal P\mathcal S g=0$ and $\mathcal R\operatorname{curl}_2 v=\mathcal R\mathcal S g=g$, where we haved used that $\mathcal P g=0$ (and the observation made at the beginning of this section). Therefore $$Tv=(a,g,h)\,.$$
Concerning the regularity of $v$, the usual Dirichlet and Neumann Schauder estimates give
$e,b\in C^{s+2,\alpha}$. Since rotations commute with $\Delta$ and, on
the circle, with the normal derivative, the function $\mathcal R b$
solves the Dirichlet problem 
$$
\Delta (\mathcal Rb)=g \text{ in } D\,,\qquad \mathcal R b=0 \text{ on } \partial D\,,
$$
and $\mathcal Re$ solves the Neumann problem 
$$
\Delta (\mathcal Re)=\mathcal Ra \text{ in } D\,,\qquad \partial_\nu (\mathcal R e)=\mathcal Rh \text{ on } \partial D\,.
$$
The same Schauder estimates yield
$\mathcal Rb,\mathcal Re\in C^{s+2,\alpha}$. Finally, using the elementary identities 
\[
 \mathcal R\nabla e=\nabla\mathcal Re+J\nabla e,
 \qquad
 \mathcal R(J\nabla b)=J\nabla\mathcal Rb-\nabla b,
\]
we infer that $v\in\mathcal X_s$ and the bounded inverse estimate:
\begin{align*}
 \|v\|_{\mathcal X_s}
 &\le C\left(
   \|e\|_{C^{s+2,\alpha}}
   +\|b\|_{C^{s+2,\alpha}}
   +\|\mathcal Re\|_{C^{s+2,\alpha}}
   +\|\mathcal Rb\|_{C^{s+2,\alpha}}
 \right)\\
 &\le C\left(
   \|a\|_{\mathcal A_s}
   +\|g\|_{C^{s,\alpha}}
   +\|h\|_{C^{s+2,\alpha}(\partial D)}
 \right).
\end{align*}

To prove the injectivity of $T$, take $v\in\mathcal X_s$ such that $Tv=0$. Then $\mathcal R\operatorname{curl}_2v=0$, which together with
$\mathcal P\operatorname{curl}_2v=0$ (because $v\in \mathcal X_s$), gives
$$\operatorname{curl}_2v=0\,.$$ 
The boundary normal component $v\cdot \nu$ is equal to its mean, and hence constant, so
the fact that $\operatorname{div} v=0$, makes that constant zero. Since $D$ is
simply connected, $v$ is the gradient of a harmonic function with
zero Neumann data, hence $v=0$, and we are done.
\end{proof}

\section{Proof of Theorem~\ref{prop:sf-existence}}\label{S:proof}

For any small enough $\varepsilon>0$, define the domain
\begin{equation}\label{eq:sf-domain}
 \Omega_\varepsilon
 :=\{(r\cos\phi,r\sin\phi,z):(\log r)^2+z^2<\varepsilon^2, \ \phi \in \mathbb{R}/(2\pi \mathbb{Z})\}.
\end{equation}
It is clear that this is a rotationally symmetric toroidal domain (disjoint from the $z$-axis), which is a tubular neighborhood of the circle $\{r=1,z=0\}$ of thickness $O(\varepsilon)$.

Let $\Psi_\varepsilon$ be the function on the disk obtained in Proposition~\ref{lem:sf-deformation}, which has regularity $C^{k+1,\alpha}$ (we have set $k:=2l$). To pass from the $(x,y)\in D$ coordinates to the cylindrical coordinates $(r,z)$, we 
introduce the following change of variables on the meridional section:
\begin{equation}\label{eq:sf-coordinates}
 r=e^{\varepsilon x}\,,\qquad z=\varepsilon y\,,
 \end{equation}
which transforms the domain $\Omega_\varepsilon$ onto the torus $D\times \mathbb S^1$. We also define
$$
 \psi(r,z):=\varepsilon^2\Psi_\varepsilon(x,y).
$$
It is clear that
\begin{equation*}
 \partial_r\psi=\frac{\varepsilon}{r}\partial_x \Psi_\varepsilon\,, \qquad
 \partial_z\psi=\varepsilon\partial_y\Psi_\varepsilon\,.    
\end{equation*}

The Grad-Shafranov operator
$\Delta^*=\partial_{rr}-r^{-1}\partial_r+\partial_{zz}$ acting on $\psi$ then satisfies
\begin{align*}
 \frac{\Delta^*\psi}{r^2}
 &=e^{-4\varepsilon x}
       \bigl(\partial_{xx}\Psi_\varepsilon
                  -2\varepsilon\partial_x\Psi_\varepsilon\bigr)
       +e^{-2\varepsilon x}\partial_{yy}\Psi_\varepsilon
 =f_\varepsilon(\Psi_\varepsilon)\,,
 \label{eq:sf-gs-transform}
\end{align*}
where we have used the equation that the function $\Psi_\varepsilon$ satisfies on the disk. Then, defining
\begin{equation*}\label{eq:sf-bernoulli}
 H(s):=\varepsilon^2
          \int_0^{s/\varepsilon^2}f_\varepsilon(c)\,dc,
 \qquad 0\leq s\leq\varepsilon^2/2\,,
\end{equation*}
we obtain that $H'(\psi)=f_\varepsilon(\Psi_\varepsilon)$, so
\begin{equation}\label{eq:sf-gs}
 \Delta^*\psi=r^2H'(\psi)\,,
\end{equation}
which is precisely the Grad-Shafranov equation stated in Section~\ref{S:intro} with constant $F$. 

Finally, to define the MHS equilibrium, set $\kappa:=n/m$ and write
\begin{equation}\label{eq:sf-velocity-pressure}
 B=-\frac{\partial_z\psi}{r}\mathbf e_r
       +\frac{\kappa}{r}\mathbf e_\phi
       +\frac{\partial_r\psi}{r}\mathbf e_z\,.
\end{equation}
By construction, $B$ is of class $C^{k,\alpha}$, $\operatorname {div } B=0$ and it is an MHS equilibrium because $\psi$ satisfies the Grad-Shafranov equation. In particular, $B$ is tangent to $\partial\Omega_\varepsilon$ because $\psi|_{\partial\Omega_\varepsilon}=\varepsilon^2/2$ (recall that $\Psi_\varepsilon|_{\partial D}=1/2$). We also notice that
\[
B\times \cu  B
       -\nabla H(\psi)=0\,,
\]
and hence $H(\psi)$ coincides with minus the plasma pressure.

Let us now analyze the orbits of $B$. The first obvious observation is that $B_\phi=\kappa/r\neq0$, so
the field is nowhere zero. In $(x,y,\phi)$ coordinates, the ODEs for the orbits of $B$ are
\begin{equation}\label{eq:sf-trajectory}
 \dot x=-\frac{\partial_y\Psi_\varepsilon}{r^2}\,,\qquad
 \dot y=\frac{\partial_x\Psi_\varepsilon}{r^2}\,,\qquad
 \dot\phi=\frac{\kappa}{r^2}\,, \qquad
 r^2= e^{2\varepsilon x}.
\end{equation}
Thus, as a vector field in these coordinates we get
\begin{equation}\label{eq:sf-rescaling}
 r^2B=X_{\Psi_\varepsilon}+\kappa\partial_\phi\,,
\end{equation}
where $X_{\Psi_\varepsilon}=J\nabla\Psi_\varepsilon$ is the 2D Hamiltonian field associated to the function $\Psi_\varepsilon$ (with respect to the standard symplectic form). Applying the diffeomorphism $\gamma_\varepsilon^{-1}\times\mathrm{id}_{S^1}:D\times \mathbb S^1\to D\times \mathbb S^1$ (recall again Proposition~\ref{lem:sf-deformation}) to the
rescaled field $m r^2B$ we see that
\begin{equation}\label{eq:sf-model}
\big(\gamma_\varepsilon^{-1}\times\mathrm{id}_{S^1}\big)_{*}(mr^2B) = m(-Y\partial_X+X\partial_Y)+n\partial_\phi
 \quad\text{on }D\times S^1,
\end{equation}
where $(X,Y)$ are Cartesian coordinates on the unit disk.
Its flow is the globally defined circle action
\begin{equation}\label{eq:sf-action}
 (X+iY,e^{i\phi})\longmapsto
       \bigl(e^{imt}(X+iY),e^{int}e^{i\phi}\bigr),
 \qquad t\in\mathbb R/(2\pi\mathbb Z).
\end{equation}
This formula remains valid at the disk centre and avoids the polar
coordinate singularity there.

For $X+iY\neq0$, coprimality implies that the minimal period of
\eqref{eq:sf-action} is $2\pi$. During this period the orbit makes
$m$ meridional and $n$ longitudinal circuits. On $X=Y=0$, the minimal
period is $2\pi/n$. The image of
this central fibre is the Euclidean circle
\begin{equation}\label{eq:expanded-core}
 C =\{(r_\varepsilon\cos\phi,r_\varepsilon\sin\phi,z_\varepsilon):
              \phi\in\mathbb R/(2\pi\mathbb Z)\},
 \qquad
 r_\varepsilon=e^{\varepsilon x_\varepsilon},\quad
 z_\varepsilon=\varepsilon y_\varepsilon.
\end{equation}
where we recall that $q_\varepsilon=(x_\varepsilon, y_\varepsilon)=\gamma_\varepsilon(0)$. Since the rescaling factor $m r^2$ is
nonvanishing, $B$ has the same trajectories (up to a change of orientation in the case that $m<0$). Hence all orbits are closed and the orbit foliation has precisely
the asserted Seifert multiplicities.

The level circles of $\Psi_\varepsilon$ revolve into nested invariant
tori surrounding $C$. Orient their meridians and longitudes as stated in Section~\ref{S:intro} (this is independent of $n$ and $m$).
The latter circles have linking number zero with $C$ and hence give
the preferred longitude. A meridian has linking number $-1$
with $C$, so the winding class $m\mu+n\lambda$ proves
\eqref{eq:sf-linking}. 

The magnetic axis is the set $\{\psi=0\}$, and it is the common core of the nested toroidal surfaces $\{\psi=c\}$, $0<c\leq\varepsilon^2/2$. Moreover, it is a round (unknotted) circle about the symmetry axis (i.e., the $z$-axis). Let us estimate the departure of the magnetic axis $C$ in \eqref{eq:expanded-core} from the reference circle $$\{r=1,z=0\}\,.$$
To this end, we follow the notation and computations presented in the proof of Proposition~\ref{lem:sf-deformation}. Indeed, 
\[
(x_\varepsilon,y_\varepsilon)=q_\varepsilon=\gamma_\varepsilon(0)=v_\varepsilon(0)\,.
\]
The vector field $v_\varepsilon(0)$ on the disk $D$ can be computed as an expansion in $\varepsilon$ from the fact that it solves
$\mathcal F(v_\varepsilon,\varepsilon)=(0,0,0)$. It is enough to compute the first order expansion, i.e.,
\[
v_\varepsilon=0+\partial_\varepsilon v_\varepsilon|_{\varepsilon=0}+O(\varepsilon^2)
\]
The vector field  $v^{(1)}:=\partial_\varepsilon v_\varepsilon|_{\varepsilon=0}$ satisfies the equation
\[
Tv^{(1)}= -D_\varepsilon\mathcal{F}(0,0)
\]
Since 
\[
 E_\varepsilon(0)
 =e^{-4\varepsilon x}(1-2\varepsilon x)+e^{-2\varepsilon x},
 \qquad \partial_\varepsilon E_\varepsilon(0)|_{\varepsilon=0}=-8x\,,
\]
we obtain that
\[
D_\varepsilon\mathcal{F}(0,0)=(0,-8x,0)\,.
\]
The definition of the operator $T$ then yields 
\[
\operatorname{div}v^{(1)}=0\,,\quad 
 \mathcal R(\operatorname{curl}_2v^{(1)})=8x\,, \quad  v^{(1)}\cdot\nu=0\ \text{on }\partial D\,.
\]
Following the proof of Lemma~\ref{lem:sf-linear}, it is easy to check that the unique solution to these equations is the following vector field (which is in $\mathcal X_s$ for any $s$):
\[
 v^{(1)}(x,y)=(1-x^2-3y^2,\,2xy)\,.
\]
As $v^{(1)}(0,0)=(1,0)$, we finally conclude that
\[
 (x_\varepsilon,y_\varepsilon)=(\varepsilon,0)+O(\varepsilon^2),
\]
which yields 
$$r_\varepsilon=1+\varepsilon^2+O(\varepsilon^3)\,,\quad z_\varepsilon=O(\varepsilon^3)\,.$$ 
On $C$ the poloidal components vanish, but
$B=(\kappa/r_\varepsilon)\mathbf e_\phi$ does not vanish, so $C$ is a periodic orbit of $B$.
This completes the proof of the theorem.

\begin{re}
The smallness threshold $\varepsilon_0$ depends on $k$ and $\alpha$, but not on
$m,n$. In fact, the same construction works for every constant $\kappa\in\mathbb R$, which yields MHS equilibria all whose orbits are ergodic (on each invariant toroidal surface) by taking $\kappa\not\in \mathbb Q$, or MHS equilibria all whose orbits are closed and poloidal by taking $\kappa=0$.
The periods $2\pi$ and $2\pi/n$ above refer to the normalized field
$m r^2B$. The physical periods of $B$ need not be constant across the
invariant tori.
\end{re}

\section*{Acknowledgements}

\noindent The authors are very grateful to Boris Khesin for proposing the problem on the existence of Seifert type MHS equilibria in axisymmetric toroidal domains, and explaining to us its interest in plasma physics. This work is supported by the grants CEX2023-001347-S and PID2022-136795NB-I00 (D.P.-S.) funded by MCIN/AEI /10.13039/501100011033.

\section*{Disclaimer on the use of AI tools}
\noindent Publicly available versions of ChatGPT (Sol and Astra) were used in the preparation of this manuscript. All mathematical statements and proofs have been developed and written by the authors, who take
full responsibility for the mathematical content and the final text.

\end{document}